\newcommand*{\R}{{\mathbb R}}
\newcommand*{\e}{\varepsilon}
\newcommand*{\la}{\langle}
\newcommand*{\ra}{\rangle}
\title[Optimal Tensor Methods for Smooth Convex Optimization]{
Optimal Tensor Methods in Smooth Convex and Uniformly Convex Optimization}
\begin{document}

\maketitle
 \begin{center}
     \Large September 2, 2018\footnote{The first version of this paper appeared on September 2, 2018 in Russian. In the current version we present a translation into English of the main derivations and extend the analysis from the case of strongly convex objective to the case of uniformly convex objectives and add the numerical analysis of our results.}
 \end{center}

\begin{abstract}%
  We consider convex optimization problems with the objective function having Lipshitz-continuous $p$-th order derivative, where $p\geq 1$. We propose a new tensor method, which closes the gap between the lower  $O\left(\e^{-\frac{2}{3p+1}} \right)$ and upper $O\left(\e^{-\frac{1}{p+1}} \right)$ iteration complexity bounds for this class of optimization problems.
  We also consider uniformly convex functions, and show how the proposed method can be accelerated under this additional assumption. Moreover, we introduce a $p$-th order condition number which naturally arises in the complexity analysis of tensor methods under this assumption. 
  Finally, we make a numerical study of the proposed optimal method and show that in practice it is faster than the best known accelerated tensor method. We also compare the performance of tensor methods for $p=2$ and $p=3$ and show that the 3rd-order method is superior to the 2nd-order method in practice. 
\end{abstract}

\begin{keywords}%
  Convex optimization, unconstrained minimization, tensor methods, worst-case complexity, global complexity bounds, condition number%
\end{keywords}

\section{Introduction}
\label{S:Intro}


In this paper, we consider the unconstrained convex optimization problem
\begin{equation}\label{eq:pr_st}
f\left( x \right)\to \mathop {\min }\limits_{x\in {\R}^n},
\end{equation}
where $f$ has $p$-th Lipschitz-continuous derivative with constant $M_p$.
For $p=1$, first-order methods are commonly used to solve this problem, i.e., gradient descent. The lower bound for the complexity of these methods was proposed in \citep{nemirovsky1983problem,nesterov2004introduction}, and an optimal method was introduced in \citep{nesterov1983method}.
The case of $p=2$, i.e., Newton-type methods, was well understood only recently. A nearly optimal method was proposed in \citep{nesterov2008accelerating}, an optimal method was proposed in \citep{monteiro2013accelerated}, and a lower bound was obtained in \citep{agarwal2018lower,arjevani2018oracle}.

The idea of using higher order derivatives (starting from $p \geq 3$) in optimization is known at least since 1970's, see \cite{hoffmann1978higher-order}. Recently this direction of research became of interest from the point of view of complexity bounds.
In the unpublished preprint \cite{baes2009estimate}, extending the estimating functions technique of \cite{nesterov2004introduction}, proposes accelerated high-order (tensor) methods for convex problems with complexity $O\left(\left(\frac{M_pR^{p+1}}{\e}\right)^{\frac{1}{p+1}} \right)$, where $p\geq 1$, $\e$ is the accuracy of the obtained solution $\hat{x}$, i.e., $f(\hat{x}) - f^* \leq \e$, $M_p$ is the Lipschitz constant of the $p$-th derivative, and $R$ is an estimate for the distance between a starting point and the closest solution.  Nevertheless, the author doubts that the obtained methods are implementable since the auxiliary problem on each iteration is possibly non-convex. \cite{agarwal2018lower,arjevani2018oracle} construct lower complexity bounds $O\left(\left(\frac{M_pR^{p+1}}{\e}\right)^{\frac{2}{5p+1}}\right)$ and  $O\left(\left(\frac{M_pR^{p+1}}{\e}\right)^{\frac{2}{3p+1}} \right)$ respectively for the case $f$ having Lipschitz $p$-th derivative and conjecture that the upper bound can be improved. \cite{nesterov2018implementable} proposes implementable tensor methods showing that an appropriately regularized Taylor expansion of a convex function is again a convex function, thus making auxiliary problems on each iteration of the tensor methods tractable. The author also provides an accelerated scheme with complexity bound $O\left(\left(\frac{M_pR^{p+1}}{\e}\right)^{\frac{1}{p+1}} \right)$, shows that the complexity of each iteration for $p=3$ is of the same order as for the case $p=2$, and conjectures the existence of an optimal scheme with complexity bound $O\left(\left(\frac{M_pR^{p+1}}{\e}\right)^{\frac{2}{3p+1}} \right)$.

The optimal method for the case $p=1$  has complexity $O\left(\left(\frac{M_1R^{2}}{\e}\right)^{\frac{1}{2}} \right)$ \citep{nesterov1983method} and for $p=2$ has the complexity $O\left(\left(\frac{M_2R^{3}}{\e}\right)^{\frac{2}{7}} \right)$ \citep{monteiro2013accelerated}, but the question of existence of optimal methods for $p\geq 3$ remains open. In this paper we extend the framework of \cite{monteiro2013accelerated} and propose optimal tensor methods for all $p\geq 1$. Our approach is also based on regularized Taylor step of \cite{nesterov2018implementable}, and, thus, our optimal method for $p=2$ is different from \cite{monteiro2013accelerated}.

We also consider problem \eqref{eq:pr_st} under additional assumption that $f$ is uniformly convex, i.e., there exist $2 \leq q \leq p+1$ and $\sigma_q >0$ s.t.
\[
    f(y) \geq f(x) + \la \nabla f(x) , y-x \ra + \frac{\sigma_q}{q}\|y-x\|_2^q, \forall x,y \in Q.
\]
Under this additional assumption, we show, how the restart technique can be applied to accelerate our method to obtain complexity 
\[
O \left( \left( \frac{M_p}{\sigma_{p+1}} \right)^{\frac{2}{3p+1}} \log_2 \frac{\Delta_0}{\e} \right), q=p+1; \quad O \left( \left( \frac{M_p (\Delta_0)^{\frac{p+1-q}{q}}}{\sigma_q^{\frac{p+1}{q}}} \right)^{\frac{2}{3p+1}} + \log_2 \frac{\Delta_0}{\e} \right), q < p+1,
\]
where $f(x_0) -f^* \leq \Delta_0$. This bound suggests a natural generalization of first- and second-order condition number \citep{nesterov2008accelerating}. If $f$ is such that $q=p+1$, then the complexity of our algorithm depends only logarithmically on the starting point and is proportional to
\[
\left( \gamma_p \right)^{\frac{2}{3p+1}},
\]
where $\gamma_p = \frac{M_p}{\sigma_{p+1}}$ is the $p$-th order condition number. \cite{nemirovsky1983problem,nesterov2004introduction} and \cite{arjevani2018oracle} propose lower bounds for particular cases of strongly convex functions (i.e., $q=2$) with $p=1$ and $p=2$ respectively. Our upper bounds match them.

As a related work, we also mention \cite{birgin2017worst-case,cartis2017improved}, who study complexity bounds for tensor methods for finding approximate stationary points with the main focus on non-convex optimization, which we do not consider in our work. Also the work in \citep{wibisono2016variational} considers tensor methods from the variational perspective and obtains similar bounds to those in \cite{baes2009estimate}. 
The first version of this paper appeared in arXiv on September 2, 2018.
In December 2018, two months after that, \cite{jiang2018loptimal,bubeck2018near-optimal} proposed an algorithm, which is very similar to our Algorithm~\ref{alg:MSN}. Unlike them, we also analyze the case of uniformly convex functions and propose an algorithm, which is faster in this case, see our Algorithm~\ref{alg:rest-MSN}. Moreover, we are the first to make a numerical study of tensor methods for $p=3$ and show that they work in practice.

\textbf{Our contributions.}
\begin{itemize}
    \item We propose a new optimal tensor method and analyze its iteration complexity.
    \item We generalize this method for the case of uniformly convex objectives and propose a definition of $p$-th order condition number.
    \item We make a numerical study of the proposed method and show that our optimal method is faster than accelerated tensor method \cite{nesterov2018implementable} in practice. We also compare the performance of tensor methods for $p=2$ and $p=3$ and show that the 3rd-order method is superior to the 2nd-order method in practice. 
\end{itemize}

\textbf{Notations and generalities.} For $p \geq 1$, we denote by 
$
\nabla^p f(x)[h_1,...,h_p] 
$
the directional derivative of function $f$ at $x$ along directions $h _i\in \R^n$, $i = 1,...,p$. $\nabla^p f(x)[h_1,...,h_p] $ is symmetric $p$-linear form and its norm is defined as 
\[
\|\nabla^p f(x)\|_2 = \max_{h_1,...,h_p \in \R^n} \{ \nabla^p f(x)[h_1,...,h_p] : \|h_i\|_2 \leq 1, i = 1,...,p\}
\]
or equivalently
\[
\|\nabla^p f(x)\|_2 = \max_{h \in \R^n} \{ |\nabla^p f(x)[h,...,h]| : \|h\|_2 \leq 1, i = 1,...,p \}.
\]
Here, for simplicity, $\|\cdot\|_2$ is standard Euclidean norm, but our algorithm and derivations can be generalized for the Euclidean norm given by general a positive semi-definite matrix $B$.
We consider convex, $p$ times differentiable on $\R$ functions  satisfying Lipschitz condition for $p$-th derivative
\begin{equation}
    \|\nabla^p f(x) - \nabla^p f(y)\|_2 \leq M_p \|x-y\|_2, x,y \in \R^n.
\end{equation}

\section{Optimal Tensor Method}

Given a function $f$, numbers $p \geq 1$ and $M \geq 0$, define
\begin{equation}
\label{eq4}
T_{p,M}^f \left( x \right)\in \mbox{Arg}\mathop {\min }\limits_{y\in {\rm 
R}^n} \left\{ {\sum\limits_{r=0}^p {\frac{1}{r!}\nabla ^rf\left( x \right)} 
\underbrace {\left[ {y-x,...,y-x} \right]}_r+\frac{M}{\left( {p+1} 
\right)!}\left\| {y-x} \right\|_2^{p+1} } \right\}.
\end{equation}
and given a number $L \geq 0$ and point $z \in \R^n$, we define
\begin{equation}
F_{L,z} \left( x \right) \triangleq f\left( x \right)+\frac{L}{2}\left\| 
{x-z} \right\|_2^2 .
\end{equation}

\begin{theorem}
\label{Th:main-conv}
Let sequence $(x^k,y^k,u^k)$, $k \geq 0$ be generated by Algorithm \ref{alg:MSN}. Then 
\[
f(y^N) - f^* \leq \frac{cM_p\|y^0-x_*\|_2^{p+1}}{N^{\frac{3p+1}{2}}}, \text{ } c = \frac{2^{\frac{3(p+1)^2+4}{4}}(p+1)}{p!}.\]

\end{theorem}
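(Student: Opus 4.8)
The plan is to realize one step of Algorithm~\ref{alg:MSN} as an inexact proximal-point step and then feed it into the accelerated hybrid proximal extragradient (A-HPE) analysis of \cite{monteiro2013accelerated}, with the regularized Taylor step \eqref{eq4} of \cite{nesterov2018implementable} supplying the required relative-error bound. I would organize it in three steps.

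\emph{Step 1 (one tensor step is an inexact prox step).} Put $y^{k+1}=T_{p,M}^f(x^k)$ and $\Delta_{k+1}=y^{k+1}-x^k$. The first-order optimality condition for the minimizer in \eqref{eq4} together with the Lipschitz bound on $\nabla^p f$ applied to the Taylor remainder gives the gradient estimate of \cite{nesterov2018implementable},
\[
\Big\|\nabla f(y^{k+1})+\tfrac{M}{p!}\,\|\Delta_{k+1}\|_2^{p-1}\,\Delta_{k+1}\Big\|_2\ \le\ \tfrac{M_p}{p!}\,\|\Delta_{k+1}\|_2^{p}.
\]
Choosing the stepsize $\lambda_{k+1}=\tfrac{p!}{M\,\|\Delta_{k+1}\|_2^{p-1}}$ turns this into the HPE relative-error inequality $\|\lambda_{k+1}\nabla f(y^{k+1})+\Delta_{k+1}\|_2\le\tfrac{M_p}{M}\,\|\Delta_{k+1}\|_2$, i.e. the criterion holds with tolerance $\sigma=M_p/M<1$ as soon as $M>M_p$. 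One must check that the line search over $\lambda_{k+1}$ in Algorithm~\ref{alg:MSN} is always feasible: the relevant quantity is continuous in $\lambda_{k+1}$, so an intermediate-value argument keeps $\sigma$ inside a fixed interval $[\sigma_\ell,\sigma_u]\subset(0,1)$ and simultaneously delivers the two-sided bound $\lambda_{k+1}\,\|\Delta_{k+1}\|_2^{p-1}=\Theta(1/M_p)$, which is exactly what the last step needs.

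\emph{Step 2 (A-HPE estimating sequence).} With $A_0=0$, $a_{k+1}$ the positive root of $a_{k+1}^2=\lambda_{k+1}(A_k+a_{k+1})$, $A_{k+1}=A_k+a_{k+1}$, the coupling $x^k=\tfrac{A_k}{A_{k+1}}y^k+\tfrac{a_{k+1}}{A_{k+1}}u^k$ and the mirror update $u^{k+1}=u^k-a_{k+1}\nabla f(y^{k+1})$, convexity of $f$ combined with the HPE inequality of Step~1 yields the potential decrease
\[
A_{k+1}\big(f(y^{k+1})-f^*\big)+\tfrac12\|u^{k+1}-x_*\|_2^2\ \le\ A_{k}\big(f(y^{k})-f^*\big)+\tfrac12\|u^{k}-x_*\|_2^2-\tfrac{1-\sigma^2}{2}\,\tfrac{A_{k+1}}{\lambda_{k+1}}\,\|\Delta_{k+1}\|_2^2 .
\]
Telescoping from $u^0=y^0$ and dropping nonnegative terms gives both $f(y^N)-f^*\le\tfrac{\|y^0-x_*\|_2^2}{2A_N}$ and the energy bound $\sum_{k=1}^{N}\tfrac{A_k}{\lambda_k}\|\Delta_k\|_2^2\le(1-\sigma^2)^{-1}\|y^0-x_*\|_2^2$.

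\emph{Step 3 (lower bound on $A_N$).} From $a_{k+1}^2=\lambda_{k+1}A_{k+1}$ one gets $\sqrt{A_{k+1}}-\sqrt{A_k}\ge\tfrac12\sqrt{\lambda_{k+1}}$, hence $A_N\ge\tfrac14\big(\sum_{k=1}^{N}\sqrt{\lambda_k}\big)^2$. For $p=1$ this already finishes the proof, since there $\|\Delta_k\|_2^{p-1}=1$ forces $\lambda_k=\Theta(1/M_1)$ to be constant and $A_N=\Theta(N^2/M_1)$. For $p\ge2$, substitute $\|\Delta_k\|_2^2=\Theta\big((M_p\lambda_k)^{-2/(p-1)}\big)$ from Step~1 into the energy bound to get $\sum_{k=1}^{N}A_k\,\lambda_k^{-(p+1)/(p-1)}\le \Theta\big(M_p^{2/(p-1)}\|y^0-x_*\|_2^{2}\big)$. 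Combining this with $A_k\ge\tfrac14\big(\sum_{j\le k}\sqrt{\lambda_j}\big)^2$ and the monotonicity of $A_k$ through a power-mean (Hölder) inequality — whose extremal configuration is $\lambda_k\propto k^{p-1}$ — forces
\[
A_N\ \ge\ \frac{\mathrm{const}\cdot N^{(3p+1)/2}}{M_p\,\|y^0-x_*\|_2^{p-1}},
\]
and plugging this into $f(y^N)-f^*\le\|y^0-x_*\|_2^2/(2A_N)$ gives the stated rate. Keeping every constant — from $\sigma\in[\sigma_\ell,\sigma_u]$, from the factors $p!$, $(p+1)$ and $M=\Theta(M_p)$ in Step~1, and from the loss in the Hölder step (which contributes a factor growing like $(p+1)^{p+1}$) — through Steps~2--3 is what produces the explicit $c=\tfrac{2^{(3(p+1)^2+4)/4}(p+1)}{p!}$. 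I expect the genuinely delicate point to be this last step: extracting the sharp exponent $(3p+1)/2$ from the interplay of $A_N\ge\tfrac14(\sum_k\sqrt{\lambda_k})^2$ and $\sum_k A_k\lambda_k^{-(p+1)/(p-1)}\le\mathrm{const}$ while tracking all constants; a secondary technical point is verifying feasibility of the stepsize line search.
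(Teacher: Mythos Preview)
Your approach is essentially the paper's own: verify that the regularized Taylor step satisfies the Monteiro--Svaiter relative-error criterion, invoke the A-HPE potential and energy inequalities (Theorem~\ref{Th:M-S-conv} and Lemma~\ref{pr:prop2}), and then extract the growth rate of $A_N$ from the energy bound via the Lagrangian/H\"older argument.

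Two small corrections are worth flagging. In Step~1, Algorithm~\ref{alg:MSN} applies the tensor step to $F_{L_k,x^k}$, not to $f$; the line-search parameter is $L_k=1/\lambda_{k+1}$, which enters the auxiliary problem directly (it is not recovered post hoc from $\|\Delta_{k+1}\|$), and the estimate you actually need is $\|\nabla F_{L_k,x^k}(y^{k+1})\|_2\le\frac{(p+1)M_p}{p!}\|y^{k+1}-x^k\|_2^p$ combined with \eqref{eq:L_k-cond}. In Step~3, your H\"older step (equivalently, the paper's Lagrangian minimization \eqref{eq18}) only yields the implicit recursion $A_N\ge D\big(\sum_{k\le N}A_k^{(p-1)/(3p+1)}\big)^{(3p+1)/(p+1)}$; the paper then unwinds this by an explicit induction (Lemma~\ref{lem:c_estimate}) to obtain $A_N\ge (cM_pR^{p-1})^{-1}N^{(3p+1)/2}$ with the stated constant. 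Your ``extremal configuration $\lambda_k\propto k^{p-1}$'' is only a heuristic (and the correct extremal scaling is actually $\lambda_k\propto k^{3(p-1)/2}$), so this induction is where the constant is really produced.
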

Note that this bound allows to obtain an $O\left(\left(\frac{M_pR^{p+1}}{\e}\right)^{\frac{2}{3p+1}} \right)$ iteration complexity. The implementability and cost of each iteration is discussed below in Section \ref{S:Implem_details}.
The proof of Theorem~\ref{Th:main-conv} is based on the framework of \cite{monteiro2013accelerated}, which is presented in the next subsection. 

\floatname{algorithm}{Algorithm}
\begin{algorithm}[H]
\caption{Optimal Tensor Method}
\label{alg:MSN}
\begin{algorithmic}[1]
\REQUIRE $u_0, y_0$~--- starting points; $N$~--- iteration number; $A_0 = 0$
\ENSURE $y^N$
\FOR{$k=0,1,2,\ldots, N-1$}
\STATE Choose $L_k$ such that 
\begin{equation}
\label{eq:L_k-cond}
     \frac{1}{2}\leq \frac{2(p+1)M_p}{p!L_k}\|y^{k+1} - x^k\|_2^{p-1} \leq 1 ,  
\end{equation}
where
$$
a_{k+1} =\frac{1 \mathord{\left/ {\vphantom {1 {L_k }}} \right. 
\kern-\nulldelimiterspace} {L_k }+\sqrt {1 \mathord{\left/ {\vphantom {1 
{L_k^2 }}} \right. \kern-\nulldelimiterspace} {L_k^2 }+4{A_k } 
\mathord{\left/ {\vphantom {{A_k } {L_k }}} \right. 
\kern-\nulldelimiterspace} {L_k }} }{2},
\quad
A_{k+1} =A_k +a_{k+1} , \;\;\; \{\text{note that } L_ka_k^2 = A_{k+1}\}
$$
$$
x^k = \frac{A_k}{A_{k+1}}y^k + \frac{a_{k+1}}{A_{k+1}}u^k, \quad y^{k+1} = T_{p,pM_p}^{F_{L_k,x^k}}(x^k).\; 
$$
\STATE $u^{k+1} = u^k - a_{k+1}\nabla f(y^{k+1})$
\ENDFOR
\RETURN $y^N$
\end{algorithmic}
\end{algorithm}

\subsection{Accelerated hybrid proximal extragradient method}

\cite{monteiro2013accelerated} introduced 
Algorithm~\ref{alg:MS}
for convex optimization problems. To find $y^{k+1}$ on each iteration, the authors use gradient type method for the case $p=1$ and a trust region Newton-type method for the case $p=2$. 
Their analysis of the algorithm is based on the following Theorem.
\begin{theorem}[{ \cite[Theorem 3.6 ]{monteiro2013accelerated} }]
\label{Th:M-S-conv}
Let sequence $(x^k,y^k,u^k)$, $k \geq 0$ be generated by Algorithm \ref{alg:MS} and define $R:=\left\| {y^0-x_\ast } \right\|_2$. Then, for all $N \geq 0$, 
\begin{equation}
\label{eq11}
\frac{1}{2}\left\| {u^N-x_\ast } \right\|_2^2 +A_N \cdot \left( {f\left( 
{y^N} \right)-f\left( {x_\ast } \right)} 
\right)+\frac{1}{4}\sum\limits_{k=1}^N {A_k L_{k-1} \left\| {y^k-x^{k-1}} 
\right\|_2^2 } \le \frac{R^2}{2},
\end{equation}
\begin{equation}
\label{eq12}
f\left( {y^N} \right)-f\left( {x_\ast } \right)\le \frac{R^2}{2A_N },
\quad
\left\| {u^N-x_\ast } \right\|_2 \le R,
\end{equation}
\begin{equation}
\label{eq13}
\sum\limits_{k=1}^N {A_k L_{k-1} \left\| {y^k-x^{k-1}} \right\|_2^2 } \le 
2R^2.
\end{equation}
\end{theorem}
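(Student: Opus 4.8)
The plan is to prove the master inequality \eqref{eq11} by a Lyapunov (estimating-sequence) argument and then read off \eqref{eq12} and \eqref{eq13} as immediate corollaries. I would introduce the potential
\[
\phi_k \triangleq \frac{1}{2}\left\|u^k - x_\ast\right\|_2^2 + A_k\left(f(y^k) - f(x_\ast)\right),
\]
and observe that, since $A_0 = 0$ and the method is initialized with $u^0 = y^0$, we have $\phi_0 = \frac{1}{2}\|y^0 - x_\ast\|_2^2 = \frac{R^2}{2}$. The entire content of \eqref{eq11} is then the per-iteration decrease
\[
\phi_{k+1} \le \phi_k - \frac{A_{k+1}L_k}{4}\left\|y^{k+1} - x^k\right\|_2^2,
\]
because summing this over $k = 0,\dots,N-1$ telescopes the potential and, after the reindexing $k \mapsto k-1$ in the error term, produces exactly the three nonnegative summands on the left of \eqref{eq11}, all bounded by $\phi_0 = R^2/2$. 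Once \eqref{eq11} is established, \eqref{eq12} and \eqref{eq13} follow by discarding the appropriate nonnegative terms: keeping only $\frac{1}{2}\|u^N - x_\ast\|_2^2$ gives $\|u^N - x_\ast\|_2 \le R$; keeping only $A_N(f(y^N)-f(x_\ast))$ gives $f(y^N)-f(x_\ast)\le R^2/(2A_N)$; and keeping only the sum gives \eqref{eq13}.

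The core is the one-step descent, and here is how I would set it up. Starting from the update $u^{k+1} = u^k - a_{k+1}\nabla f(y^{k+1})$, I expand
\[
\tfrac{1}{2}\left\|u^{k+1}-x_\ast\right\|_2^2 = \tfrac{1}{2}\left\|u^k - x_\ast\right\|_2^2 - a_{k+1}\left\langle \nabla f(y^{k+1}), u^k - x_\ast\right\rangle + \tfrac{a_{k+1}^2}{2}\left\|\nabla f(y^{k+1})\right\|_2^2 .
\]
For the objective gap I would invoke convexity at $y^{k+1}$ twice: once against $y^k$, to bound $A_{k+1}f(y^{k+1}) - A_k f(y^k)$ by $a_{k+1}f(y^{k+1}) - A_k\langle \nabla f(y^{k+1}), y^k - y^{k+1}\rangle$, and once against $x_\ast$, to replace $a_{k+1}(f(y^{k+1})-f(x_\ast))$ by $a_{k+1}\langle \nabla f(y^{k+1}), y^{k+1}-x_\ast\rangle$. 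Adding the three contributions, the terms carrying $x_\ast$ cancel, and the remaining linear terms collapse through the averaging identity $A_{k+1}x^k = A_k y^k + a_{k+1}u^k$ (together with $A_k + a_{k+1} = A_{k+1}$) into the single expression $A_{k+1}\langle \nabla f(y^{k+1}), y^{k+1} - x^k\rangle$.

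This yields, after using the defining relation $L_k a_{k+1}^2 = A_{k+1}$ to rewrite $\tfrac{a_{k+1}^2}{2}\|\nabla f(y^{k+1})\|_2^2 = \tfrac{A_{k+1}}{2L_k}\|\nabla f(y^{k+1})\|_2^2$,
\[
\phi_{k+1} - \phi_k \le A_{k+1}\left[\left\langle \nabla f(y^{k+1}), y^{k+1}-x^k\right\rangle + \tfrac{1}{2L_k}\left\|\nabla f(y^{k+1})\right\|_2^2\right].
\]
It remains to show the bracket is at most $-\tfrac{L_k}{4}\|y^{k+1}-x^k\|_2^2$, and this is exactly where the inexact-proximal (HPE) error criterion enters: the inner solver returns $y^{k+1}$ with $\|\tfrac{1}{L_k}\nabla f(y^{k+1}) + y^{k+1} - x^k\|_2^2 \le \sigma^2\|y^{k+1}-x^k\|_2^2$. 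Expanding this square and multiplying by $L_k/2$ rearranges precisely into
\[
\left\langle \nabla f(y^{k+1}), y^{k+1}-x^k\right\rangle + \tfrac{1}{2L_k}\left\|\nabla f(y^{k+1})\right\|_2^2 \le -\tfrac{(1-\sigma^2)L_k}{2}\left\|y^{k+1}-x^k\right\|_2^2 ,
\]
so that the choice $\sigma^2 = \tfrac12$ produces the coefficient $\tfrac14$ and closes the one-step descent.

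I expect the main obstacle to be the bookkeeping in this one-step estimate rather than any single hard inequality: correctly pairing the two convexity bounds so that the $x_\ast$-terms cancel and the linear terms telescope into $A_{k+1}(y^{k+1}-x^k)$, and tracking the constant (the factor $\tfrac14$ is tied to the tolerance $\sigma^2=\tfrac12$, and degrades to $\tfrac{1-\sigma^2}{2}$ for general $\sigma$). The remaining delicate points are verifying $\phi_0 = R^2/2$ from the initialization $u^0=y^0$ and checking the sign and reindexing in the telescoped error sum; everything else is mechanical, and the corollaries \eqref{eq12}–\eqref{eq13} are then automatic from nonnegativity of the discarded terms.
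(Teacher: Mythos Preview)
The paper does not prove this theorem; it is quoted verbatim from \cite{monteiro2013accelerated} and used as a black box in the subsequent analysis. Your Lyapunov argument is the standard one and is correct: the potential $\phi_k$ telescopes exactly as you describe, and the two convexity inequalities together with the coupling $A_{k+1}x^k = A_k y^k + a_{k+1}u^k$ and $L_k a_{k+1}^2 = A_{k+1}$ collapse the cross terms to $A_{k+1}\langle\nabla f(y^{k+1}),y^{k+1}-x^k\rangle$, after which the HPE error criterion closes the one-step descent.

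One minor slip in the constant: the error criterion in Algorithm~\ref{alg:MS} is $\|\nabla F_{L_k,x^k}(y^{k+1})\|_2 \le \tfrac{L_k}{2}\|y^{k+1}-x^k\|_2$, which after dividing by $L_k$ reads $\bigl\|\tfrac{1}{L_k}\nabla f(y^{k+1}) + (y^{k+1}-x^k)\bigr\|_2 \le \tfrac12\|y^{k+1}-x^k\|_2$. So $\sigma = \tfrac12$, i.e.\ $\sigma^2 = \tfrac14$, not $\sigma^2 = \tfrac12$ as you wrote. This actually gives the stronger coefficient $\tfrac{1-\sigma^2}{2} = \tfrac38$ in your one-step descent, and the stated bound with $\tfrac14$ in \eqref{eq11} follows a fortiori; Monteiro--Svaiter simply record the weaker constant.
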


We also need the following Lemma.
\begin{lemma}[ {\cite[Lemma 3.7 a)]{monteiro2013accelerated}}]\label{pr:prop2}
    Let sequences $\left\{ {A_k ,L_k } \right\}$, $k\geq 0$ be generated by Algorithm \ref{alg:MS}. Then, for all $N\geq 0$,
\begin{equation}
\label{eq14}
A_N \ge \frac{1}{4}\left( {\sum\limits_{k=1}^N {\frac{1}{\sqrt {L_{k-1} } }} 
} \right)^2.
\end{equation}    
\end{lemma}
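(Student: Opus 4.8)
The plan is to establish the equivalent statement $\sqrt{A_N} \ge \frac12\sum_{k=1}^N L_{k-1}^{-1/2}$ and then square both sides. The key is to bound the per-step increment $\sqrt{A_{k+1}}-\sqrt{A_k}$ from below by $\frac{1}{2\sqrt{L_k}}$ and telescope, so that the square-root of $A_N$ inherits the sum of the $L_{k-1}^{-1/2}$.

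First I would record the algebraic identity hidden in the definition of $a_{k+1}$. By construction $a_{k+1}$ is the positive root of the quadratic $L_k a^2 - a - A_k = 0$, so $L_k a_{k+1}^2 = a_{k+1}+A_k = A_{k+1}$ (this is exactly the relation noted in the algorithm); equivalently $a_{k+1} = \sqrt{A_{k+1}}/\sqrt{L_k}$. In particular every $a_{k+1}\ge 0$, so the sequence $\{A_k\}$ is nondecreasing, starting from $A_0=0$.

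Next I would exploit concavity of the square root. Writing the increment as a difference of squares,
\[
\sqrt{A_{k+1}}-\sqrt{A_k} = \frac{A_{k+1}-A_k}{\sqrt{A_{k+1}}+\sqrt{A_k}} = \frac{a_{k+1}}{\sqrt{A_{k+1}}+\sqrt{A_k}} \ge \frac{a_{k+1}}{2\sqrt{A_{k+1}}},
\]
where the inequality uses the monotonicity $A_k\le A_{k+1}$ established above. Substituting the identity $a_{k+1}=\sqrt{A_{k+1}}/\sqrt{L_k}$ collapses the right-hand side to $\frac{1}{2\sqrt{L_k}}$, giving the clean bound $\sqrt{A_{k+1}}-\sqrt{A_k}\ge \frac{1}{2\sqrt{L_k}}$.

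Finally I would telescope this inequality over $k=0,\dots,N-1$. Using $A_0=0$,
\[
\sqrt{A_N} = \sum_{k=0}^{N-1}\left(\sqrt{A_{k+1}}-\sqrt{A_k}\right) \ge \frac12\sum_{k=0}^{N-1}\frac{1}{\sqrt{L_k}} = \frac12\sum_{k=1}^{N}\frac{1}{\sqrt{L_{k-1}}},
\]
and squaring yields the claim. There is no serious obstacle here: the whole argument rests on spotting that the Monteiro--Svaiter update encodes $A_{k+1}=L_k a_{k+1}^2$, which is precisely what makes the substitution after the concavity step cancel perfectly. The only point requiring care is the monotonicity $A_k\le A_{k+1}$ used to lower-bound the denominator $\sqrt{A_{k+1}}+\sqrt{A_k}$ by $2\sqrt{A_{k+1}}$, and that follows immediately from $a_{k+1}\ge 0$.
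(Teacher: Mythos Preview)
Your proof is correct. Note, however, that the paper does not give its own proof of this lemma: it is quoted verbatim as \cite[Lemma~3.7~a)]{monteiro2013accelerated} and used as a black box. Your argument is essentially the standard one from that reference --- extract the identity $L_k a_{k+1}^2 = A_{k+1}$ from the quadratic defining $a_{k+1}$, use monotonicity of $\{A_k\}$ to bound $\sqrt{A_{k+1}}-\sqrt{A_k}\ge a_{k+1}/(2\sqrt{A_{k+1}}) = 1/(2\sqrt{L_k})$, and telescope --- so there is no meaningful methodological difference to report.
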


\floatname{algorithm}{Algorithm}
\begin{algorithm}[H]
\caption{Accelerated hybrid proximal extragradient method}
\label{alg:MS}
\begin{algorithmic}[1]
\REQUIRE $u_0, y_0$~--- starting point; $N$~--- iteration number; $A_0 = 0$
\ENSURE $y^N$
\FOR{$k=0,1,2,\ldots, N-1$}
\STATE Choose $L_k$ and $y^{k+1}$ s.t. $\left\| {\nabla F_{L_k ,x^k} \left( {y^{k+1}} \right)} \right\|_2 \le 
\frac{L_k }{2}\left\| {y^{k+1}-x^k} \right\|_2$, where
\[
a_{k+1} =\frac{1 \mathord{\left/ {\vphantom {1 {L_k }}} \right. 
\kern-\nulldelimiterspace} {L_k }+\sqrt {1 \mathord{\left/ {\vphantom {1 
{L_k^2 }}} \right. \kern-\nulldelimiterspace} {L_k^2 }+4{A_k } 
\mathord{\left/ {\vphantom {{A_k } {L_k }}} \right. 
\kern-\nulldelimiterspace} {L_k }} }{2},
\quad
A_{k+1} =A_k +a_{k+1} , \quad x^k=\frac{A_k }{A_{k+1} }y^k+\frac{a_{k+1} }{A_{k+1} }u^k.
\]
\STATE $
u^{k+1}=u^k-a_{k+1} \nabla f\left( {y^{k+1}} \right).$
\ENDFOR
\RETURN $y^N$
\end{algorithmic}
\end{algorithm}

\subsection{Proof of Theorem \ref{Th:main-conv}}
It follows from Algorithm \ref{alg:MSN} that $y^{k+1} = 
T_{p,pM_p}^{F_{L_k,x^k}}(x^k)$, thus by  \cite[Lemma 1]{nesterov2018implementable},
\[
\left\| {\nabla F_{L_k ,x^k} \left( y^{k+1} 
\right)} \right\|_2 \le \frac{\left( {p+1} \right)M_p }{p!}\left\| {y^{k+1}-x^k} \right\|_2^p .
\]
At the same time, by the condition in step 2 of Algorithm, \ref{alg:MSN},
\[
\frac{2(p+1)M_p}{p!L_k}\|y^{k+1} - x^k\|_2^{p-1} \leqslant 1.
\]
Hence,
\[
\left\| {\nabla F_{L_k ,x^k} \left( y^{k+1} 
\right)} \right\|_2 \le \frac{L_k }{2}\left\| {y^{k+1}-x^k} \right\|_2
\]
and we can apply the framework of the previous subsection. What remains is to estimate the growth of $A_N$, which is our next step.

By the condition in step 2 of Algorithm, \ref{alg:MSN}, 
\begin{equation}
\label{eq15}
\frac{1}{L_k }\left\| {y^{k+1}-x^k} \right\|_2^{p-1} \ge \theta ,
\end{equation}
where $\theta = \frac{p!}{4(p+1)M_p}$. Using this inequality, we prove that
\begin{equation}
\label{eq16}
\sum\limits_{k=1}^N {A_k L_{k-1}^{\frac{p+1}{p-1}} } \le 2R^2\theta 
^{-\frac{2}{p-1}}.
\end{equation}
Indeed, from \eqref{eq13} and \eqref{eq15} we have that
\begin{align}
\theta ^{\frac{2}{p-1}}\sum\limits_{k=1}^N {A_k L_{k-1}^{\frac{p+1}{p-1}} } 
    &\le \sum\limits_{k=1}^N {A_k L_{k-1}^{1+\frac{2}{p-1}} \left( 
    {\frac{1}{L_{k-1} }\left\| {y^k-x^{k-1}} \right\|_2^{p-1} } 
    \right)^{\frac{2}{p-1}}} \notag \\
    &=\sum\limits_{k=1}^N {A_k L_{k-1} \left\| {y^k-x^{k-1}} \right\|_2^2 } \le 2R^2.
\end{align}    
Further, from \eqref{eq16} it follows that
\begin{equation}
\label{eq17}
\sum\limits_{k=1}^N {\frac{1}{\sqrt {L_{k-1} } }} \ge \frac{\theta 
^{\frac{1}{p+1}}}{\left( {2R^2} \right)^{\frac{p-1}{2\left( {p+1} 
\right)}}}\left( {\sum\limits_{k=1}^N {A_k^{\frac{p-1}{3p+1}} } } 
\right)^{\frac{3p+1}{2\left( {p+1} \right)}}.
\end{equation}

To prove that, let us introduce new variables $z_k =1 \mathord{\left/ {\vphantom {1 
{\sqrt {L_{k-1} } }}} \right. \kern-\nulldelimiterspace} {\sqrt {L_{k-1} } 
}$ and consider the following optimization problem to find the worst possble value of the l.h.s. in \eqref{eq17}
\begin{equation}
\label{eq18}
\min \sum\limits_{k=1}^N {z_k } \quad \text{s.t.} \quad  \sum\limits_{k=1}^N 
{A_k z_k^{-\gamma } } \le C ,
\end{equation}
where in accordance with \eqref{eq16}
\[
\gamma =2\frac{p+1}{p-1},
\quad
C=2R^2\theta ^{-\frac{2}{p-1}}.
\]
Since the objective and constraints are separable, this problem can be solved explicitly by the Lagrange principle
\[
z_k =\left( {\frac{1}{C}\sum\limits_{j=1}^N {A_j^{\frac{1}{\gamma +1}} } } 
\right)^{1 \mathord{\left/ {\vphantom {1 \gamma }} \right. 
\kern-\nulldelimiterspace} \gamma }A_k^{\frac{1}{\gamma +1}} .
\]
Hence,
\[
\mathop {\min }\limits_{\sum\limits_{k=1}^N {A_k z_k^{-\gamma } } \le C} 
\sum\limits_{k=1}^N {z_k } =\frac{1}{C^{1 \mathord{\left/ {\vphantom {1 
\gamma }} \right. \kern-\nulldelimiterspace} \gamma }}\left( 
{\sum\limits_{k=1}^N {A_k^{\frac{1}{\gamma +1}} } } \right)^{\frac{\gamma 
+1}{\gamma }}.
\]
From this inequality, \eqref{eq14} and \eqref{eq17}, we have
\begin{equation}
\label{eq19}
A_N \ge\frac{1}{4} \frac{\theta 
^{\frac{2}{p+1}}}{\left( {2R^2} \right)^{\frac{p-1}{ {p+1} }}} \left( {\sum\limits_{k=1}^N {A_k^{\frac{p-1}{3p+1}} 
} } \right)^{\frac{3p+1}{ {p+1}}}.
\end{equation}

From this inequality, we obtain that there exists a number $c$ 
such that, for all $N \geq 0$,
\begin{equation}
\label{eq20}
A_N \ge \frac{1}{cM_p R^{p-1}}N^{\frac{3p+1}{2}}.
\end{equation}
The derivation of exact value of the constant $c$ can be found in Lemma~\ref{lem:c_estimate} in Appendix.
This finishes the proof.

\subsection{Implementation details}
\label{S:Implem_details}
First of all, Theorem 1 in \cite{nesterov2018implementable} says that, by the appropriate choice $M=pM_p$ in \eqref{eq4}, the subproblem for finding $y^{k+1}$ in step 2 of Algorithm \ref{alg:MSN} is convex and, thus is tractable. Moreover, for $p=2$ this step corresponds to the step of cubic regularized Newton method of \cite{nesterov2006cubic} and, as it is shown there, can be computed with the same complexity as solving a linear system. For the case $p=3$, \cite{nesterov2018implementable} showed that this step can be also computed efficiently. In both cases the complexity of calculating $y^{k+1}$ is $\tilde {{\rm O}}\left( {n^{2.37}} \right)$.

Let us now discuss the process of finding such $L_k$ that the inequality \eqref{eq:L_k-cond} holds. By construction,
\[
y^{k+1} = \arg \min_{y \in \R^n} \left\{ {\sum\limits_{r=0}^p {\frac{1}{r!}\nabla ^rf\left( x^k \right)} 
\underbrace {\left[ {y-x^k,...,y-x^k} \right]}_r+\frac{pM_p}{\left( {p+1} 
\right)!}\left\| {y-x^k} \right\|_2^{p+1} + \frac{L_k}{2}\|y - x^k\|_2^2} \right\}.
\]
This problem is strongly convex and, thus, has a unique solution for each $L_k >0$. Hence, $y^{k+1}$ is uniquely defined by $L_k$. At the same time, if $L_k \to 0$, $y^{k+1} \to \tilde{y}^k$ with
\[
\tilde{y}^{k} \in \mbox{Arg} \min_{y \in \R^n} \left\{ {\sum\limits_{r=0}^p {\frac{1}{r!}\nabla ^r f\left( x^k \right)} 
\underbrace {\left[ {y-x^k,...,y-x^k} \right]}_r+\frac{pM_p}{\left( {p+1} 
\right)!}\left\| {y-x^k} \right\|_2^{p+1}} \right\}
\]
being a fixed point. Whence,
\[
\frac{2(p+1)M_p}{p!L_k}\|y^{k+1} - x^k\|_2^{p-1} \to + \infty.
\]
On the other hand, if $L_k \to +\infty$, $y^{k+1} \to x^k$ and
\[
\frac{2(p+1)M_p}{p!L_k}\|y^{k+1} - x^k\|_2^{p-1} \to 0.
\]
By the continuity of the dependence of $y^{k+1}$ from $L_k$, we see that there exists such $L_k$ that inequality \eqref{eq:L_k-cond} holds. Appropriate value of $L_k $ can be found by an extended line-search procedure as in \cite[Section 7]{monteiro2013accelerated}. The details of complexity of the line-search can be found in \cite{jiang2018loptimal,bubeck2018near-optimal}, where the authors prove a bound of $\tilde{O}(1)$ calls of $T_{p,pM_p}^{F_{L_k,x^k}}(x^k)$ on each iteration. 

\section{Extension for Uniformly Convex Case}

In this section, we additionally assume that the objective function is uniformly convex of degree $q \geq 2$, i.e., there exists $\sigma_q >0$ s.t.
\begin{equation}
    \label{eq:unif-conv-def}
    f(y) \geq f(x) + \la \nabla f(x) , y-x \ra + \frac{\sigma_q}{q}\|y-x\|_2^q, \forall x,y \in Q.
\end{equation}
We also assume that $q \leq p+1$.
As a corollary,
\begin{equation}
    \label{eq:unif-conv-opt-point}
    f(y) \geq f(x_*) +  \frac{\sigma_q}{q}\|y-x_*\|_2^q, \forall y \in Q,
\end{equation}
where $x_*$ is a solution to problem \eqref{eq:pr_st}. We show, how the restart technique can be used to accelerate Algorithm \ref{alg:MSN} under this additional assumption.

\begin{algorithm}
\caption{Restarted Optimal Tensor Method}
\label{alg:rest-MSN}
\begin{algorithmic}[1]
   \REQUIRE $p$, $M_p$, $q$, $\sigma_q$, $z_0, \Delta_0$ s.t. $f(z^0)-f^* \leq \Delta_0.$
   \FOR{$k=0,1,...$}
			\STATE 
			\begin{equation}
			    \label{eq:rest-MSN-N_k_def}
			    \text{Set} \quad \Delta_k = \Delta_0\cdot2^{-k} \quad \text{and} \quad N_k = \max\left\{\left\lceil \left( \frac{2cM_p q^{\frac{p+1}{q}}}{\sigma_q^{\frac{p+1}{q}}} \Delta_k^{\frac{p+1-q}{q}} \right)^{\frac{2}{3p+1}}\right\rceil,1 \right\}.
			\end{equation}
			\STATE Set $z_{k+1}=y^{N_k}$ as the output of Algorithm \ref{alg:MSN} started from $z_k$ and run for $N_k$ steps.
			\STATE Set $k=k+1$.

			\ENDFOR			
		\ENSURE $z_k$.
\end{algorithmic}
\end{algorithm}

\begin{theorem}
\label{Th:main-unif-conv-conv}
Let sequence $z^k$, $k \geq 0$ be generated by Algorithm \ref{alg:rest-MSN}. Then 
\[
\frac{\sigma_q}{q}\|z_k-x_*\|_2^q \leq f(z_k) - f^* \leq \Delta_0 \cdot 2^{-k},
\]
and the total number of steps of Algorithm \ref{alg:MSN} is bounded by ($c$ is defined in \eqref{eq20})
\[
\left(2c q^{\frac{p+1}{q}}\right)^{\frac{2}{3p+1}} \frac{M_p^{\frac{2}{3p+1}} }{\sigma_q^{\frac{2(p+1)}{q(3p+1)}}} (\Delta_0)^{\frac{2(p+1-q)}{q(3p+1)}} \cdot \sum_{i=0}^k  2^{-i\frac{2(p+1-q)}{q(3p+1)}} + k.
\]

\end{theorem}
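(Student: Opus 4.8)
The plan is to prove the accuracy bound $f(z_k)-f^* \le \Delta_k := \Delta_0 2^{-k}$ by induction on $k$, and then to bound the total running time by summing the stage lengths $N_i$ as a geometric series. The base case $k=0$ is exactly the input condition $f(z_0)-f^*\le\Delta_0$.

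For the inductive step, suppose $f(z_k)-f^*\le\Delta_k$. Applying \eqref{eq:unif-conv-opt-point} gives $\frac{\sigma_q}{q}\|z_k-x_*\|_2^q \le f(z_k)-f^* \le \Delta_k$, and since $q\le p+1$ the map $t\mapsto t^{(p+1)/q}$ is increasing, so $\|z_k-x_*\|_2^{p+1} \le (q\Delta_k/\sigma_q)^{(p+1)/q}$. Stage $k$ of Algorithm~\ref{alg:rest-MSN} runs Algorithm~\ref{alg:MSN} from $z_k$ for $N_k$ iterations and returns $z_{k+1}$, so Theorem~\ref{Th:main-conv} (with $y^0=z_k$ and $N=N_k$) yields
\[
f(z_{k+1})-f^* \le \frac{cM_p\|z_k-x_*\|_2^{p+1}}{N_k^{(3p+1)/2}} \le \frac{cM_p\,(q\Delta_k/\sigma_q)^{(p+1)/q}}{N_k^{(3p+1)/2}}.
\]
It remains to verify that the choice of $N_k$ in \eqref{eq:rest-MSN-N_k_def} forces this to be at most $\Delta_{k+1}=\Delta_k/2$: writing $(q\Delta_k/\sigma_q)^{(p+1)/q}/\Delta_k = q^{(p+1)/q}\sigma_q^{-(p+1)/q}\Delta_k^{(p+1-q)/q}$, the requirement is exactly $N_k^{(3p+1)/2} \ge 2cM_p q^{(p+1)/q}\sigma_q^{-(p+1)/q}\Delta_k^{(p+1-q)/q}$, which after raising to the power $2/(3p+1)$ is precisely the right-hand side of \eqref{eq:rest-MSN-N_k_def} (the ceiling and the $\max\{\cdot,1\}$ only make $N_k$ larger). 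This closes the induction; combining $f(z_k)-f^*\le\Delta_k$ with \eqref{eq:unif-conv-opt-point} once more gives the first chain of inequalities in the theorem.

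For the running-time bound, use $\lceil x\rceil\le x+1$ and $\max\{x,1\}\le x+1$ for $x>0$ in \eqref{eq:rest-MSN-N_k_def}, together with $\Delta_i=\Delta_0 2^{-i}$, to obtain
\[
N_i \le \left(2cq^{\frac{p+1}{q}}\right)^{\frac{2}{3p+1}} \frac{M_p^{\frac{2}{3p+1}}}{\sigma_q^{\frac{2(p+1)}{q(3p+1)}}} (\Delta_0)^{\frac{2(p+1-q)}{q(3p+1)}}\, 2^{-i\frac{2(p+1-q)}{q(3p+1)}} + 1 .
\]
Producing $z_k$ uses stages $0,\dots,k-1$, so the total number of Algorithm~\ref{alg:MSN} steps is $\sum_{i=0}^{k-1}N_i$; bounding $\sum_{i=0}^{k-1}(\cdot)\le\sum_{i=0}^{k}(\cdot)$ for the nonnegative geometric term and collecting the $k$ unit terms gives the stated bound. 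When $q=p+1$ the exponent $\frac{2(p+1-q)}{q(3p+1)}$ is zero, the geometric factors collapse to $1$, and the bound becomes $O\big((M_p/\sigma_{p+1})^{2/(3p+1)}\,k\big)$; for $q<p+1$ the ratio $2^{-i\cdot 2(p+1-q)/(q(3p+1))}<1$ is summable, leaving an additive $O(k)$ plus a term independent of the target accuracy. Taking $k=\lceil\log_2(\Delta_0/\e)\rceil$ recovers the complexity estimates announced in the introduction.

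I do not anticipate a genuine difficulty; the argument is a restart/induction scheme whose only delicate point is checking that the specific constant and exponents baked into \eqref{eq:rest-MSN-N_k_def} are exactly those needed to sustain the halving $\Delta_{k+1}=\Delta_k/2$, and tracking where $q\le p+1$ is used — it guarantees both that $(p+1-q)/q\ge 0$ (so $N_k$ stays bounded as $\Delta_k\to0$) and that the passage from $\|z_k-x_*\|_2^q$ to $\|z_k-x_*\|_2^{p+1}$ via a monotone power is legitimate.
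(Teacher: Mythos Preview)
Your proof is correct and follows essentially the same approach as the paper: an induction on $k$ combining Theorem~\ref{Th:main-conv} with the uniform convexity inequality \eqref{eq:unif-conv-opt-point} to show $f(z_k)-f^*\le\Delta_k$, followed by summing the stage lengths $N_i$. You are in fact a bit more careful than the paper in handling the ceiling via $\lceil x\rceil\le x+1$ and in noting that only stages $0,\dots,k-1$ are needed to produce $z_k$, but the substance is identical.
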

\begin{proof}
Let us prove the first statement of the Theorem by induction. For $k=0$ it holds. If it holds for some $k\geq 0$, by the choice of $N_k$, we have that
\[
\frac{cM_p}{N_k^{\frac{3p+1}{2}}} \left( \frac{q \Delta_k}{\sigma_q}\right)^{\frac{p+1}{q}} \leq \frac{\Delta_k}{2}.
\]
By \eqref{eq:unif-conv-opt-point},
\[
\|z_k - x_*\|_2^{p+1} \leq \left( \frac{q (f(z_k)-f^*)}{\sigma_q}\right)^{\frac{p+1}{q}} \leq \left( \frac{q \Delta_k}{\sigma_q}\right)^{\frac{p+1}{q}}
\]
since, by our assumption, $q\leq p+1$.
Combining the above two inequalities and Theorem \ref{Th:main-conv}, we obtain
\[
f(z_{k+1}) -f^* \leq \frac{cM_p\|z_k-x_*\|_2^{p+1}}{N_k^{\frac{3p+1}{2}}} \leq \frac{\Delta_k}{2} = \Delta_{k+1}.
\]
It remains to bound the total number of steps of Algorithm \ref{alg:MSN}. Denote $\tilde{c} = \left(2c q^{\frac{p+1}{q}}\right)^{\frac{2}{3p+1}}$.
\[
\sum_{i=0}^k N_i \leq \tilde{c} \frac{M_p^{\frac{2}{3p+1}} }{\sigma_q^{\frac{2(p+1)}{q(3p+1)}}} \sum_{i=0}^k (\Delta_0\cdot 2^{-i})^{\frac{2(p+1-q)}{q(3p+1)}} + k \leq \tilde{c} \frac{M_p^{\frac{2}{3p+1}} }{\sigma_q^{\frac{2(p+1)}{q(3p+1)}}} (\Delta_0)^{\frac{2(p+1-q)}{q(3p+1)}} \cdot \sum_{i=0}^k  2^{-i\frac{2(p+1-q)}{q(3p+1)}} + k.
\]

\end{proof}
Let us make several remarks on the complexity of the restarted scheme in different settings. It is easy to see from Theorem \ref{Th:main-unif-conv-conv} that, to achieve an accuracy $\e$, i.e. to find a point $\hat{x}$ s.t. $f( \hat{x}) -f^* \leq \e$, the number of tensor steps in Algorithm \ref{alg:rest-MSN} is
\[
O\left(\frac{M_p^{\frac{2}{3p+1}} }{\sigma_q^{\frac{2(p+1)}{q(3p+1)}}} (\Delta_0)^{\frac{2(p+1-q)}{q(3p+1)}} + \log_2 \frac{\Delta_0}{\e}\right), q<p+1, \text{and} \;\;  O\left(\left(\frac{M_p^{\frac{2}{3p+1}} }{\sigma_q^{\frac{2(p+1)}{q(3p+1)}}}+1 \right)\log_2 \frac{\Delta_0}{\e} \right), q=p+1.
\]
Theorem \ref{Th:main-unif-conv-conv} suggests a natural generalization of first- and second-order condition number \cite{nesterov2008accelerating}. If $f$ is such that $q=p+1$, then the complexity of Algorithm \ref{alg:rest-MSN} depends only logarithmically on the starting point and is proportional to
$
\left( \gamma_p \right)^{\frac{2}{3p+1}}
$,
where $\gamma_p = \frac{M_p}{\sigma_{p+1}}$ is the $p$-th order condition number.
Unfortunately, if $q<p+1$, the complexity depends polinomially on the initial objective residual $\Delta_0$, which, in general, is not controlled.

An interesting special case is when $q=2$ and $p\geq 2$, and, as a consequence, $q<p+1$. As it can be seen from Theorem \ref{Th:M-S-conv} (see also \cite{bubeck2018near-optimal}), the sequence, generated by Algorithm~\ref{alg:MSN} is bounded by some $R = O(\|x^0-x_*\|_2)$. Hence, the constant $M_2$ can be estimated as $M_2 \leq M_pR^{p-2}$. At the same time, in \cite[Sect.6]{nesterov2008accelerating}, it is shown that the Cubic regularized Newton method \cite{nesterov2006cubic} has the region of quadratic convergence given by $\{ x: f(x)-f^* \leq \frac{\sigma_2^2}{2M_2^2} \leq \frac{\sigma_2^2}{2M_p^2R^{2(p-2)}}  \}$. To enter this region, Algorithm \ref{alg:rest-MSN} requires 
\begin{equation}
\label{eq:compl_to_quad_conv}
   O\left(\frac{M_p^{\frac{2}{3p+1}} }{\sigma_2^{\frac{p+1}{3p+1}}} (\Delta_0)^{\frac{p-1}{3p+1}} + \log_2 \frac{\Delta_0M_p^2R^{2(p-2)}}{\sigma_2^2} \right) = O\left(\frac{M_p^{\frac{2}{3p+1}} }{\sigma_2^{\frac{p+1}{3p+1}}} (\Delta_0)^{\frac{p-1}{3p+1}} + \log_2 \frac{M_p^2\Delta_0^{p-1}}{\sigma_2^p}\right), 
\end{equation}
where we used inequality $R^2 \leq \frac{2\Delta_0}{\sigma_2}$, which follows from \eqref{eq:unif-conv-opt-point}. After entering the region of quadratic convergence, Algorithm \ref{alg:rest-MSN} can be switched to the Cubic regularized Newton method \cite{nesterov2006cubic}, which has final stage complexity, \cite[Sect. 6]{nesterov2006cubic}
\[
O\left(\log_{3/2}\log_4\frac{\sigma_2^3}{M_2^2\e} \right) = O\left(\log_{3/2}\log_4\frac{\sigma_2^3}{M_p^2R^{2(p-2)}\e} \right).
\]
Summing this inequality and \eqref{eq:compl_to_quad_conv} we obtain the total complexity of this switching procedure to obtain small accuracy $\e$. Note, that the second term in \eqref{eq:compl_to_quad_conv}  is typically dominated by the first one, so we can ignore it without loss of generality.

Finally, let us compare our upper bound with known lower bounds. For the case $p=1$, $q=2$, our complexity bound coincides with lower bound for first-order methods \cite{nemirovsky1983problem,nesterov2004introduction}. \cite{arjevani2018oracle} propose lower bounds  for second-order methods for the case $p=2$, $q=2$ and our complexity bound coincides with their lower bound up to a change of $D = \sqrt{\frac{\Delta_0}{\sigma_2}}$, which is natural as, in this case $f$ is strongly convex. 

\section{Numerical Analysis}\label{sec:experiments}

In this section, we analyze  and compare the performance of Algorithm~\ref{alg:MSN} with the accelerated tensor method proposed in~\cite{nesterov2018implementable}. 

We study the numerical performance for two classes of functions. Initially, an universal parametric family of objective functions, which are difficult for all tensor methods~\cite{nesterov2018implementable} defined as
\begin{align}\label{eq:bad_functions}
        f_m(x) = \eta_{p + 1} \left ( A_m x \right ) - x_1,
\end{align}
where, for integer parameter $p \, \ge \, 1$,
$\eta_{p + 1} (x) = \frac{1}{p+1} \sum\limits_{i = 1}^ n
| x_i |^{p + 1}$, $2 \, \le \, m \, \le \, n$, $x \, \in \, \mathbb{R}^n$,
$A_m$ is the $n \times n$ block diagonal matrix:
\begin{align}
      A_m = 
\left ( \begin{array}{cc}
U_m & 0 \\
0 & I_{n - m}
\end{array}
\right ), \quad \text{with} \quad U_m = 
\left ( \begin{array}{ccccc}
1 & -1 & 0 & \hdots & 0 \\
0 & 1 & -1 &  \hdots & 0 \\
\vdots & \vdots & \ddots &  & \vdots \\
0 & 0 &  \hdots & 1 & -1 \\
0 & 0 &  \hdots & 0 & 1
\end{array}
\right ),  
\end{align}
and $I_n$ is the identity $n \times n$-matrix. For a detailed description of the high-order derivatives of this class of functions, and its optimality properties see~\cite{nesterov2018implementable}.

Figure~\ref{fig:Bad_functions} shows the normalized optimality gap of the iterations generated by the accelerated tensor method from~\cite{nesterov2018implementable} in Figure\ref{fig:Bad_functions}(a), and Algorithm~\ref{alg:MSN} in~Figure\ref{fig:Bad_functions}(b). We denote the minimum function value as $f^*$. For both results we have used $p=3$, and $n=k= \{5,10,15,20,25\}$. These numerical results show that Algorithm~\ref{alg:MSN} requires a much smaller number of iterations than the accelerated tensor method from~\cite{nesterov2018implementable} to reach the same optimality gap, namely $1\cdot10^{-15}$, for the class of ``bad'' functions described in ~\cite{nesterov2018implementable}. For example, for the case where $n=k=25$, Algorithm~\ref{alg:MSN} has reached the desired accuracy in about $100$ iterations, while the accelerated tensor method requires about $1\cdot10^{4}$.

\begin{figure}
\centering
 \subfigure{\includegraphics[width=0.48\textwidth]{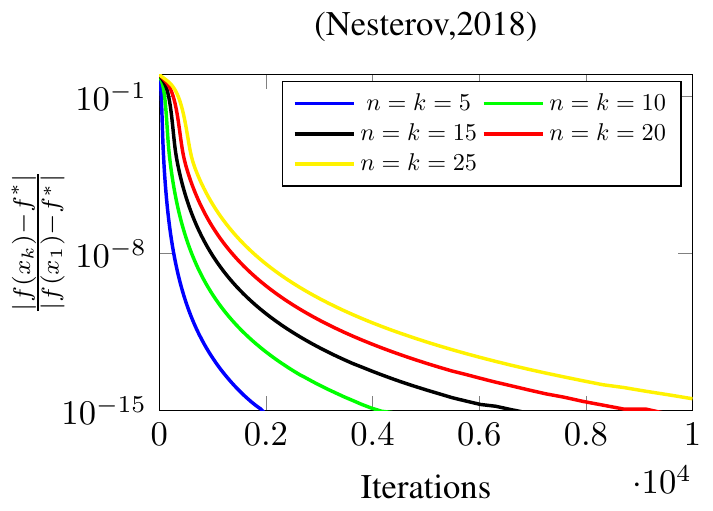}}
 \subfigure{\includegraphics[width=0.48\textwidth]{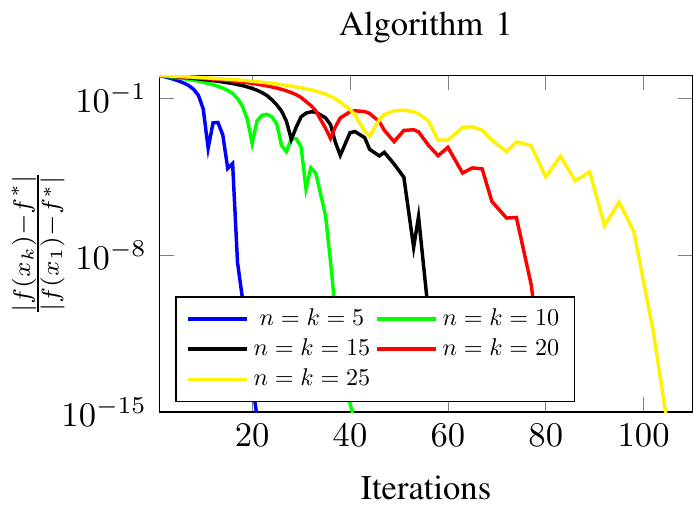}}
 \caption{A performance comparison between the accelerated tensor method in~\cite{nesterov2018implementable} (shown in (a)) and Algorithm~\ref{alg:MSN} (shown in (b)). We minimize an instance of the family of functions in~\eqref{eq:bad_functions} with $p=3$ and various values of dimension $n$ and $k$. Note that the $x$-axis scaling on both figures is different.}
	\label{fig:Bad_functions}
\end{figure}

As a second set of numerical results we study the performance of the proposed method for the non-regularized logistic regression problem. For this problem we are given a set of $d$ data pairs $\{y_i,w_i\}$ for $1\leq i\leq d$, where $y_i \in \{1, -1\}$ is the class label of object $i$, and $w_i \in \mathbb{R}^n$ is the set of features of object $i$. We are interested in finding a vector $x$ that solves the following optimization problem 
\begin{align}\label{eq:logistic loss}
        \frac{1}{d}\sum\limits_{i = 1}^d \ln\Bigl(1 + \exp\bigl(-y_i\langle w_i, x \rangle\bigr)\Bigr) \to \min_{x \in \mathbb{R}^n}. 
\end{align}

Figure~\ref{fig:Logistic} shows the simulation results for the logistic regression problem in~\eqref{eq:logistic loss} for various datasets. Similarly as in Figure~\ref{fig:Bad_functions}, we compare the performance of Algorithm~1, and the accelerated tensor method in~\cite{nesterov2018implementable}. In~Figure~\ref{fig:Logistic}(a) and Figure~\ref{fig:Logistic}(b), we generate synthetic data, where, initially we define a vector $\hat{x} \in [-1,1]$ with every entry is chosen uniformly at random. The set of features for each $i$, i.e., $w_i \in [-1,1]^n$ has also every entry chosen uniformly at random, finally each label is computed as $y_i = \text{sign}(\langle w_i, \hat{x} \rangle)$. For Figure~\ref{fig:Logistic}(a) we set $n=10$ and $d=100$, while in Figure~\ref{fig:Logistic}(b) we set $n=100$ and $d=1000$. Figure~\ref{fig:Logistic}(c) uses the \texttt{mushroom} dataset ($n=8124$ and $d=112$)~\cite{ Dua:2017}, and Figure~\ref{fig:Logistic}(d) uses the \texttt{a9a} dataset ($n=32561$ and $d=123$)~\cite{ Dua:2017}. 

For the logistic regression problem, we don't have access to the optimal value function in general, thus, we plot only the cost function evaluated at the current iterate. As expected by the theoretic results, Algorithm~\ref{alg:MSN} requires one order of magnitude less iterations than the accelerated tensor method from~\cite{nesterov2018implementable} to achieve the same function value.   

In~Appendix~\ref{app:Nesterov_p23}, we numerically compare the performance of the accelerated tensor method from~\cite{nesterov2018implementable} for $p=2$ and $p=3$, as well as its accelerated and non-accelerated versions. 

\begin{figure}
\centering
\includegraphics[width=0.5\textwidth]{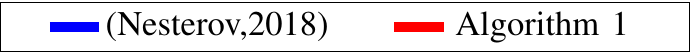}\\
 \subfigure{\includegraphics[width=0.24\textwidth]{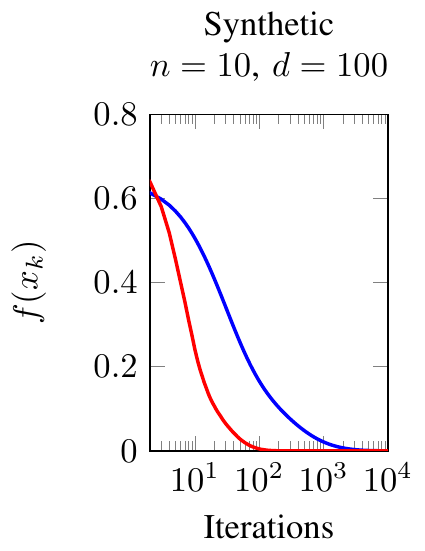}}
 \subfigure{\includegraphics[width=0.24\textwidth]{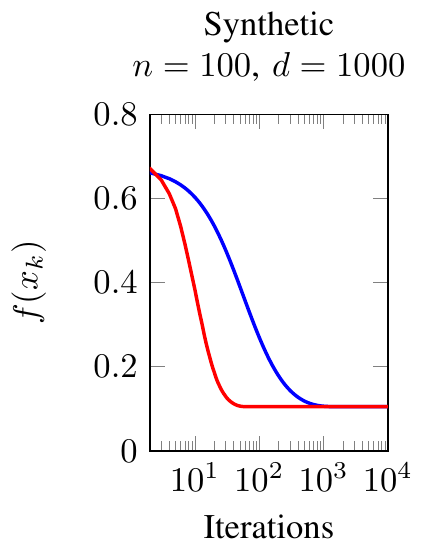}}
  \subfigure{\includegraphics[width=0.24\textwidth]{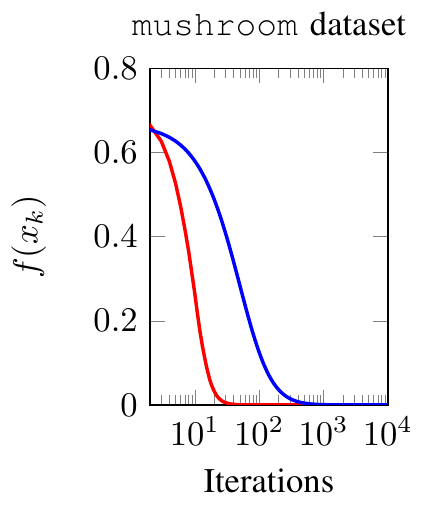}}
 \subfigure{\includegraphics[width=0.22\textwidth]{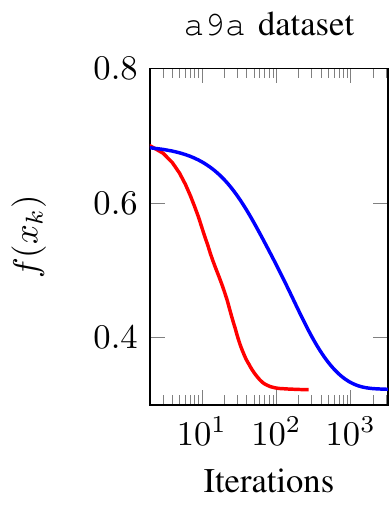}}
 \caption{Performance comparison for the non-regularized logistic regression problem between the accelerated tensor method from~\cite{nesterov2018implementable} and Algorithm~\ref{alg:MSN}. (a) Uses synthetic data with $n=10$ and $d=100$, (b) uses synthetic data with $n=100$ and $d=1000$, (c) uses the \texttt{mushroom} dataset ($d=8124$ and $n=112$)~\cite{ Dua:2017}, and (d) uses the \texttt{a9a} dataset ($d=32561$ and $n=123$)~\cite{ Dua:2017}. }
	\label{fig:Logistic}
\end{figure}

\acks{The authors are grateful to Yurii Nesterov for fruitful discussions.
The work of A.~Gasnikov was supported by RFBR 18-29-03071 mk and was prepared within the framework of the HSE University Basic Research Program and funded by the Russian Academic Excellence Project '5-100', the work of P.~Dvurechensky and E.~Vorontsova was supported by RFBR 18-31-20005 mol-a-ved and the work of E.~Gorbunov was supported by the grant of Russian's President MD-1320.2018.1}

\bibliography{PD_references}

\begin{thebibliography}{17}
\providecommand{\natexlab}[1]{#1}
\providecommand{\url}[1]{\texttt{#1}}
\expandafter\ifx\csname urlstyle\endcsname\relax
  \providecommand{\doi}[1]{doi: #1}\else
  \providecommand{\doi}{doi: \begingroup \urlstyle{rm}\Url}\fi

\bibitem[Agarwal and Hazan(2018)]{agarwal2018lower}
Naman Agarwal and Elad Hazan.
\newblock Lower bounds for higher-order convex optimization.
\newblock In S\'ebastien Bubeck, Vianney Perchet, and Philippe Rigollet,
  editors, \emph{Proceedings of the 31st Conference On Learning Theory},
  volume~75 of \emph{Proceedings of Machine Learning Research}, pages 774--792.
  PMLR, 06--09 Jul 2018.
\newblock URL \url{http://proceedings.mlr.press/v75/agarwal18a.html}.

\bibitem[Arjevani et~al.(2018)Arjevani, Shamir, and Shiff]{arjevani2018oracle}
Yossi Arjevani, Ohad Shamir, and Ron Shiff.
\newblock Oracle complexity of second-order methods for smooth convex
  optimization.
\newblock \emph{Mathematical Programming}, May 2018.
\newblock ISSN 1436-4646.
\newblock \doi{10.1007/s10107-018-1293-1}.
\newblock URL \url{https://doi.org/10.1007/s10107-018-1293-1}.

\bibitem[Baes(2009)]{baes2009estimate}
Michel Baes.
\newblock Estimate sequence methods:extensions and approximations.
\newblock Technical report, 2009.
\newblock URL
  \url{http://www.optimization-online.org/DB_FILE/2009/08/2372.pdf}.

\bibitem[Birgin et~al.(2017)Birgin, Gardenghi, Mart{\'i}nez, Santos, and
  Toint]{birgin2017worst-case}
E.~G. Birgin, J.~L. Gardenghi, J.~M. Mart{\'i}nez, S.~A. Santos, and Ph.~L.
  Toint.
\newblock Worst-case evaluation complexity for unconstrained nonlinear
  optimization using high-order regularized models.
\newblock \emph{Mathematical Programming}, 163\penalty0 (1):\penalty0 359--368,
  May 2017.
\newblock ISSN 1436-4646.
\newblock \doi{10.1007/s10107-016-1065-8}.
\newblock URL \url{https://doi.org/10.1007/s10107-016-1065-8}.

\bibitem[Bubeck et~al.(2018)Bubeck, Jiang, Lee, Li, and
  Sidford]{bubeck2018near-optimal}
S\'ebastien Bubeck, Qijia Jiang, Yin~Tat Lee, Yuanzhi Li, and Aaron Sidford.
\newblock Near-optimal method for highly smooth convex optimization.
\newblock \emph{arXiv:1812.08026}, 2018.

\bibitem[Cartis et~al.(2018)Cartis, Gould, and Toint]{cartis2017improved}
Coralia Cartis, Nicholas I.~M. Gould, and Philippe~L. Toint.
\newblock Improved second-order evaluation complexity for unconstrained
  nonlinear optimization using high-order regularized models.
\newblock \emph{arXiv:1708.04044}, 2018.

\bibitem[Dheeru and Karra~Taniskidou(2017)]{Dua:2017}
Dua Dheeru and Efi Karra~Taniskidou.
\newblock {UCI} machine learning repository, 2017.
\newblock URL \url{http://archive.ics.uci.edu/ml}.

\bibitem[Hoffmann and Kornstaedt(1978)]{hoffmann1978higher-order}
K.~H. Hoffmann and H.~J. Kornstaedt.
\newblock Higher-order necessary conditions in abstract mathematical
  programming.
\newblock \emph{Journal of Optimization Theory and Applications}, 26\penalty0
  (4):\penalty0 533--568, Dec 1978.
\newblock ISSN 1573-2878.
\newblock \doi{10.1007/BF00933151}.
\newblock URL \url{https://doi.org/10.1007/BF00933151}.

\bibitem[Jiang et~al.(2018)Jiang, Wang, and Zhang]{jiang2018loptimal}
Bo~Jiang, Haoyue Wang, and Shuzhong Zhang.
\newblock An optimal high-order tensor method for convex optimization.
\newblock \emph{arXiv:1812.06557}, 2018.

\bibitem[Monteiro and Svaiter(2013)]{monteiro2013accelerated}
R.~Monteiro and B.~Svaiter.
\newblock An accelerated hybrid proximal extragradient method for convex
  optimization and its implications to second-order methods.
\newblock \emph{SIAM Journal on Optimization}, 23\penalty0 (2):\penalty0
  1092--1125, 2013.
\newblock \doi{10.1137/110833786}.
\newblock URL \url{https://doi.org/10.1137/110833786}.

\bibitem[Nemirovsky and Yudin(1983)]{nemirovsky1983problem}
A.S. Nemirovsky and D.B. Yudin.
\newblock \emph{Problem Complexity and Method Efficiency in Optimization}.
\newblock J. Wiley \& Sons, New York, 1983.

\bibitem[Nesterov(2008)]{nesterov2008accelerating}
Yu. Nesterov.
\newblock Accelerating the cubic regularization of newton's method on convex
  problems.
\newblock \emph{Mathematical Programming}, 112\penalty0 (1):\penalty0 159--181,
  Mar 2008.
\newblock ISSN 1436-4646.
\newblock \doi{10.1007/s10107-006-0089-x}.
\newblock URL \url{https://doi.org/10.1007/s10107-006-0089-x}.

\bibitem[Nesterov(1983)]{nesterov1983method}
Yurii Nesterov.
\newblock A method of solving a convex programming problem with convergence
  rate $o(1/k^2)$.
\newblock \emph{Soviet Mathematics Doklady}, 27\penalty0 (2):\penalty0
  372--376, 1983.

\bibitem[Nesterov(2004)]{nesterov2004introduction}
Yurii Nesterov.
\newblock \emph{Introductory Lectures on Convex Optimization: a basic course}.
\newblock Kluwer Academic Publishers, Massachusetts, 2004.

\bibitem[Nesterov(2018)]{nesterov2018implementable}
Yurii Nesterov.
\newblock Implementable tensor methods in unconstrained convex optimization.
\newblock Technical report, CORE UCL, 2018.
\newblock URL
  \url{https://alfresco.uclouvain.be/alfresco/service/guest/streamDownload/workspace/SpacesStore/aabc2323-0bc1-40d4-9653-1c29971e7bd8/coredp2018_05web.pdf}.
\newblock CORE Discussion Paper 2018/05.

\bibitem[Nesterov and Polyak(2006)]{nesterov2006cubic}
Yurii Nesterov and Boris Polyak.
\newblock Cubic regularization of newton method and its global performance.
\newblock \emph{Mathematical Programming}, 108\penalty0 (1):\penalty0 177--205,
  2006.
\newblock ISSN 1436-4646.
\newblock \doi{10.1007/s10107-006-0706-8}.
\newblock URL \url{http://dx.doi.org/10.1007/s10107-006-0706-8}.

\bibitem[Wibisono et~al.(2016)Wibisono, Wilson, and
  Jordan]{wibisono2016variational}
Andre Wibisono, Ashia~C. Wilson, and Michael~I. Jordan.
\newblock A variational perspective on accelerated methods in optimization.
\newblock \emph{Proceedings of the National Academy of Sciences}, 113\penalty0
  (47):\penalty0 E7351--E7358, 2016.

\end{thebibliography}
\newpage
\appendix
\part*{Optimal Tensor Methods in Smooth Convex and Uniformly Convex Optimization: Supplementary Material}

\section{Technical lemmas}
\begin{lemma}\label{lem:c_estimate}
    Consider the sequence $\{A_k\}_{k\ge 0}$ of non-negative numbers such that
    \begin{equation}\label{eq:A_N_recurrence_appendix}
        A_N \ge \frac{1}{4}\frac{\theta^{\frac{2}{p+1}}}{\left(2R^2\right)^{\frac{p-1}{p+1}}}\left(\sum\limits_{k=1}^N A_k^{\frac{p-1}{3p+1}}\right)^{\frac{3p+1}{p+1}},
    \end{equation}
    where $p\ge 3$, $\theta = \frac{p!}{4(p+1)M_p}$ and $M_p, R > 0$.
    Then for all $N\ge 0$ we have
    \begin{equation}\label{eq:A_N_lower_bound_appendix}
        A_k \ge \frac{1}{cM_pR^{p-1}}k^{\frac{3p+1}{2}},
    \end{equation}
    where
    \begin{equation}\label{eq:c_estimate_appendix}
        c = \frac{2^{\frac{3(p+1)^2+4}{4}}(p+1)}{p!}
    \end{equation}
\end{lemma}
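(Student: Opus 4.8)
The plan is to turn the nonlinear recurrence \eqref{eq:A_N_recurrence_appendix} into a \emph{linear} telescoping inequality for a power sum, solve that, and substitute back. Write the recurrence as $A_N \ge D\,S_N^{\beta}$, where $\beta = (3p+1)/(p+1)$, $\alpha = (p-1)/(3p+1)$ (so $\alpha\beta = (p-1)/(p+1)$), $D = \tfrac14\,\theta^{2/(p+1)}(2R^2)^{-(p-1)/(p+1)}$, and $S_N = \sum_{k=1}^{N} A_k^{\alpha}$ with the convention $S_0 = 0$. One may assume $A_k>0$ for $k\ge 1$ — the only case of interest, since for the sequences produced by Algorithm~\ref{alg:MSN} one has $A_1 = 1/L_0 > 0$ and $(A_k)$ nondecreasing (for $N=0$ the bound \eqref{eq:A_N_lower_bound_appendix} is trivial). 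Raising the recurrence to the power $\alpha>0$ gives $S_N - S_{N-1} = A_N^{\alpha} \ge D^{\alpha}S_N^{\alpha\beta}$.

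Next I would use concavity of $t\mapsto t^{r}$ with $r := 1-\alpha\beta = 2/(p+1)\in(0,1)$ (this is where $p\ge 3$ enters, guaranteeing $r<1$). Since this map is concave and increasing on $[0,\infty)$, for $0\le S_{N-1}\le S_N$ we have $S_N^{r} - S_{N-1}^{r} \ge r\,S_N^{r-1}(S_N - S_{N-1})$. Combining this with the inequality from the previous paragraph and the identity $r-1+\alpha\beta = 0$ yields $S_N^{r} - S_{N-1}^{r} \ge r\,D^{\alpha}$ for every $N\ge 1$ (for $N=1$ this is immediate from $S_1^{r} = (A_1^{r})^{\alpha} \ge D^{\alpha}\ge rD^{\alpha}$, where $A_1^{r}\ge D$ is the recurrence at $N=1$). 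Summing from $1$ to $N$ and using $S_0=0$ gives $S_N^{r}\ge N r D^{\alpha}$, i.e. $S_N \ge (rD^{\alpha})^{1/r}N^{1/r}$.

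Substituting this back into $A_N \ge D\,S_N^{\beta}$ and simplifying the exponents — using $1/r = (p+1)/2$, so that $\beta/r = (3p+1)/2$ is exactly the target power and $1+\alpha\beta/r = (p+1)/2$ — produces $A_N \ge D^{(p+1)/2}\,(2/(p+1))^{(3p+1)/2}\,N^{(3p+1)/2}$. Inserting $\theta = p!/(4(p+1)M_p)$ and collecting the powers of $2$ (which cancel) and of $(p+1)$ gives
\[
A_N \;\ge\; \frac{p!}{4\,M_p R^{p-1}\,(p+1)^{3(p+1)/2}}\; N^{(3p+1)/2}.
\]
The last step is the elementary inequality $\dfrac{4(p+1)^{3(p+1)/2}}{p!}\le c$ with $c$ as in \eqref{eq:c_estimate_appendix}; after taking $\log_2$ it reduces to $2 + \tfrac{3p+1}{2}\log_2(p+1) \le \tfrac{3(p+1)^2+4}{4}$, which follows from $\log_2(p+1)\le (p+1)/2$ (valid for $p+1\ge 4$, i.e. $p\ge 3$) together with $p\ge 1$. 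This gives $A_N \ge \tfrac{1}{cM_pR^{p-1}}N^{(3p+1)/2}$, as claimed.

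I expect the only genuine idea to be the concavity/telescoping trick that linearizes the recurrence; everything else is constant bookkeeping, but that bookkeeping must be done exactly to land on the stated $c$. The reason to route through the running sum $S_N$ rather than attempt a direct induction on $A_N\ge B\,N^{(3p+1)/2}$ is that the latter loses a factor coming from $\bigl(N/(N-1)\bigr)^{(3p+1)/2}$ in the induction step, which would force a much larger constant and not reproduce \eqref{eq:c_estimate_appendix}.
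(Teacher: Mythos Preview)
Your proof is correct and takes a genuinely different route from the paper's. The paper proceeds by \emph{direct induction} on the bound $A_k\ge (cM_pR^{p-1})^{-1}k^{(3p+1)/2}$: it feeds the induction hypothesis into the sum $\sum_{k\le N}A_k^{(p-1)/(3p+1)}\ge\text{const}\cdot\sum_{k\le N}k^{(p-1)/2}$, bounds the power sum by an integral (this is where the paper uses $p\ge 3$, via $(p-1)/2\ge 1$), and then absorbs the loss $(N/(N+1))^{(3p+1)/2}\ge 2^{-(3p+1)/2}$ into the constant~$c$, which is finally pinned down by a fixed-point equation. In other words, the paper does exactly the induction you warn against in your last paragraph, and the stated $c$ in \eqref{eq:c_estimate_appendix} is precisely the constant that induction produces.

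Your concavity/telescoping trick on the running sum $S_N$ is cleaner and sharper: it avoids the $2^{-(3p+1)/2}$ loss and lands on the tighter prefactor $4(p+1)^{3(p+1)/2}/p!$, which you then verify is at most the paper's $c$. So your approach actually proves a slightly stronger inequality and recovers the lemma as a corollary. One small remark: the concavity step only needs $r=2/(p+1)<1$, i.e.\ $p>1$; the restriction $p\ge 3$ is genuinely used only at the very end, in the comparison $\log_2(p+1)\le (p+1)/2$, where you have equality at $p=3$.
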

\begin{proof}
    We prove \eqref{eq:A_N_lower_bound_appendix} by induction. For $k=1$ we have
    \begin{eqnarray*}
        A_1 \overset{\eqref{eq:A_N_recurrence_appendix}}{\ge} \frac{1}{4}\frac{\theta^{\frac{2}{p+1}}}{\left(2R^2\right)^{\frac{p-1}{p+1}}}A_1^{\frac{p-1}{p+1}} \Longleftrightarrow A_1^{\frac{2}{p+1}} \ge \frac{1}{4}\frac{\theta^{\frac{2}{p+1}}}{2^{\frac{p-1}{p+1}}R^{\frac{2(p-1)}{p+1}}}\Longleftrightarrow A_1 \ge \frac{p!}{2^{\frac{3p+5}{2}}(p+1)M_pR^{p-1}}.
    \end{eqnarray*}
    The last inequality implies \eqref{eq:A_N_lower_bound_appendix} for $p\ge 3$.
    Now let us assume that for all $k\le N$ inequality \eqref{eq:A_N_lower_bound_appendix} holds and $N\ge 1$. Next we will establish \eqref{eq:A_N_lower_bound_appendix} for $k = N+1$. We have
    \begin{eqnarray*}
        A_{N+1} &\overset{\eqref{eq:A_N_recurrence_appendix}}{\ge}& \frac{1}{4}\frac{\theta^{\frac{2}{p+1}}}{\left(2R^2\right)^{\frac{p-1}{p+1}}}\left(\sum\limits_{k=1}^{N+1} A_k^{\frac{p-1}{3p+1}}\right)^{\frac{3p+1}{p+1}}\\
        &\ge& \frac{1}{4}\frac{\theta^{\frac{2}{p+1}}}{\left(2R^2\right)^{\frac{p-1}{p+1}}}\left(\sum\limits_{k=1}^{N} A_k^{\frac{p-1}{3p+1}}\right)^{\frac{3p+1}{p+1}}\\
        &\overset{\eqref{eq:A_N_lower_bound_appendix}}{\ge}& \frac{1}{4}\frac{\theta^{\frac{2}{p+1}}}{\left(2R^2\right)^{\frac{p-1}{p+1}}}\left(\left(\frac{1}{cM_pR^{p-1}}\right)^{\frac{p-1}{3p+1}}\sum\limits_{k=1}^{N} k^{\frac{p-1}{2}}\right)^{\frac{3p+1}{p+1}}.
    \end{eqnarray*}
    If $N=1$ then
    \begin{equation}\label{eq:special_case_appendix}
        A_{N+1} = A_2 \ge \frac{1}{2^{\frac{3p+1}{2}}}\frac{\theta^{\frac{2}{p+1}}}{\left(2R^2\right)^{\frac{p-1}{p+1}}}\left(\frac{1}{cM_pR^{p-1}}\right)^{\frac{p-1}{p+1}} (2)^{\frac{3p+1}{2}}.
    \end{equation}
    If $N> 1$ we can write
    \begin{equation}\label{eq:special_case_appendix}
        A_{N+1} \ge\frac{1}{4}\frac{\theta^{\frac{2}{p+1}}}{\left(2R^2\right)^{\frac{p-1}{p+1}}}\left(\frac{1}{cM_pR^{p-1}}\right)^{\frac{p-1}{p+1}}\left(1+\sum\limits_{k=2}^{N} k^{\frac{p-1}{2}}\right)^{\frac{3p+1}{p+1}}.
    \end{equation}
    Since $\frac{p-1}{2} \ge 1$ the function $f(x) = x$ is convex and, as a consequence, we get
    \begin{equation}\label{eq:sum_lower_bound_appendix}
        \sum\limits_{k=2}^N k^{\frac{p-1}{2}} \ge \int\limits_{1}^N x^{\frac{p-1}{2}}dx = \frac{2}{p+1}N^{\frac{p+1}{2}} - \frac{2}{p+1} \ge \frac{2}{p+1}N^{\frac{p+1}{2}} - \frac{1}{2}.
    \end{equation}
    Using this fact we continue:
    \begin{eqnarray*}
        A_{N+1} &\overset{\eqref{eq:sum_lower_bound_appendix}}{\ge}& \frac{1}{4}\frac{\theta^{\frac{2}{p+1}}}{\left(2R^2\right)^{\frac{p-1}{p+1}}}\left(\frac{1}{cM_pR^{p-1}}\right)^{\frac{p-1}{p+1}}\left(\frac{1}{2}+N^{\frac{p+1}{2}}\right)^{\frac{3p+1}{p+1}}\\
        &\ge& \frac{1}{4}\frac{\theta^{\frac{2}{p+1}}}{\left(2R^2\right)^{\frac{p-1}{p+1}}}\left(\frac{1}{cM_pR^{p-1}}\right)^{\frac{p-1}{p+1}} N^{\frac{3p+1}{2}}.
    \end{eqnarray*}
    For all $N > 1$ we have
    \begin{equation*}
        \left(\frac{N}{N+1}\right)^{\frac{3p+1}{2}} = \left(1-\frac{1}{N+1}\right)^{\frac{3p+1}{2}} \ge \left(1-\frac{1}{2}\right)^{\frac{3p+1}{2}} = \frac{1}{2^{\frac{3p+1}{2}}}.
    \end{equation*}
    From this and \eqref{eq:special_case_appendix} we obtain that for all $N\ge1$
    \begin{equation*}
        A_{N+1} \ge \frac{1}{2^{\frac{3p+1}{2}}}\frac{\theta^{\frac{2}{p+1}}}{\left(2R^2\right)^{\frac{p-1}{p+1}}}\left(\frac{1}{cM_pR^{p-1}}\right)^{\frac{p-1}{p+1}} (N+1)^{\frac{3p+1}{2}}.
    \end{equation*}
    It remains to show that \eqref{eq:c_estimate_appendix} implies 
    \begin{equation*}
        \frac{1}{2^{\frac{3p+1}{2}}}\frac{\theta^{\frac{2}{p+1}}}{\left(2R^2\right)^{\frac{p-1}{p+1}}}\left(\frac{1}{cM_pR^{p-1}}\right)^{\frac{p-1}{p+1}} = \frac{1}{cM_pR^{p-1}}.
    \end{equation*}
    Using $\theta = \frac{p!}{4(p+1)M_p}$ we get
    \begin{eqnarray*}
        \frac{1}{2^{\frac{3p+1}{2}}}\frac{\theta^{\frac{2}{p+1}}}{\left(2R^2\right)^{\frac{p-1}{p+1}}}\left(\frac{1}{cM_pR^{p-1}}\right)^{\frac{p-1}{p+1}} = \frac{1}{cM_pR^{p-1}} \Longleftrightarrow c^{\frac{2}{p+1}}\frac{1}{2^{\frac{3p+1}{2}}}\left(\frac{p!}{4(p+1)}\right)^{\frac{2}{p+1}}\frac{1}{2^{\frac{p-1}{p+1}}} = 1\\
        \Longleftrightarrow c^{\frac{2}{p+1}} = 2^{\frac{3p+1}{2}}\left(\frac{4(p+1)}{p!}\right)^{\frac{2}{p+1}}2^{\frac{p-1}{p+1}} \Longleftrightarrow c = 2^{\frac{(3p+1)(p+1)}{4}}\frac{4(p+1)}{p!}2^{\frac{p-1}{2}}\\
        \Longleftrightarrow c = \frac{2^{\frac{3(p+1)^2+4}{4}}(p+1)}{p!},
    \end{eqnarray*}
    which is exactly what we have in \eqref{eq:c_estimate_appendix}.
\end{proof}

\section{Comparison of the accelerated tensor method from~\cite{nesterov2018implementable} for $p=2$ and $p=3$. }\label{app:Nesterov_p23}

In this appendix, we numerically compare the performance of the accelerated tensor method proposed in~\citep{nesterov2018implementable}, for $p=2$ and $p=3$. We also compare the accelerated and non-accelerated version of this method.

Similarly as in Figure~\ref{fig:Bad_functions} and Figure~\ref{fig:Logistic}, we present the numerical results for the class of bad functions defined in~\eqref{eq:bad_functions} and one instance of the logistic regression problem.


\begin{figure}[htb]
\center{\includegraphics[width=0.6\linewidth]{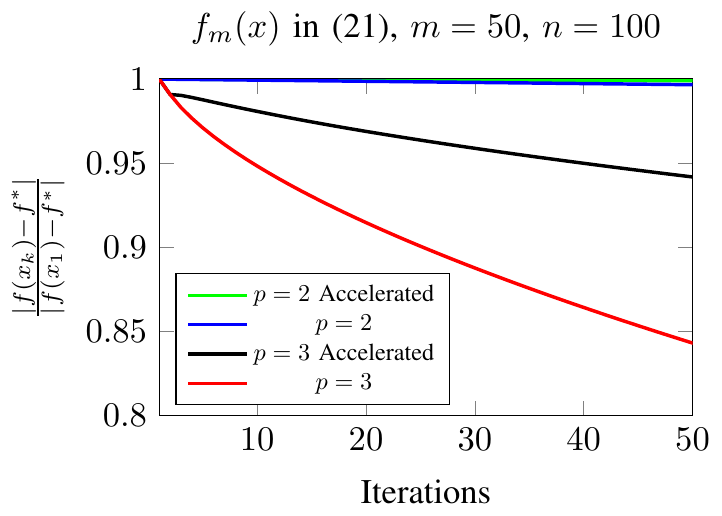}} 
			\caption{
Performance of tensor methods
and accelerated tensor methods for $p = 2$
and $p = 3$
on a difficult instance~\eqref{eq:bad_functions}
for all unconstrained minimization tensor methods with $n=100$ and $m=50$.}
			\label{ris:3}
		\end{figure}

In Figure~\ref{ris:3}, we compare the behavior of
the following methods: 1) tensor method ~\cite{nesterov2018implementable} for $p = 3$; 
2) accelerated tensor method~\cite{nesterov2018implementable} for $p = 3$; 3) tensor method ~\cite{nesterov2018implementable} for $p = 2$; 4) accelerated tensor method~\cite{nesterov2018implementable} for $p = 2$. Again, the optimal
function value is denoted by $f^*$. Interestingly, we obtain that the non-accelerated method outperforms the accelerated method for the first $m$ iterations. Since Theorem~$4$ from \cite{nesterov2018implementable} works only for $k\le m$ we don't study the behaviour of the methods for larger number of iterations. Even in this simple setting it is still non-trivial how to implement tensor methods for such bad examples of functions. 

\begin{figure}[htb]
\center{\includegraphics[width=0.51\linewidth]{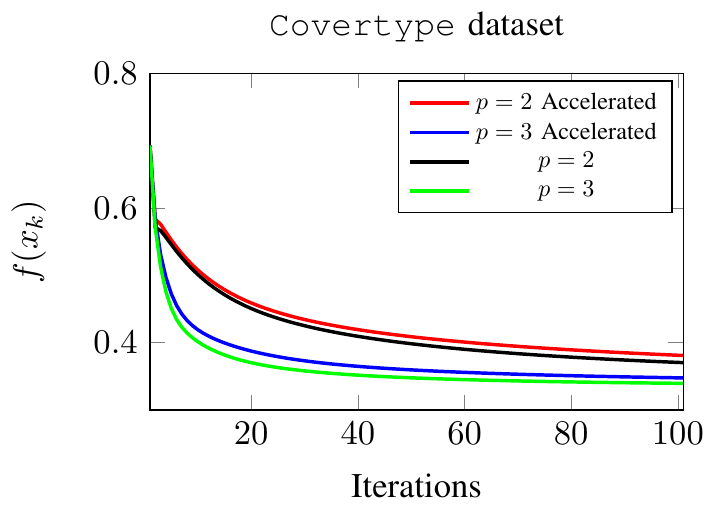} }

			\caption{Function value achieved by the iterates of the accelerated tensor method for the logistic regression problem on the \texttt{Covertype} dataset~\cite{Dua:2017}. Number of samples $d=20000$, dimension $n=55$.}
			\label{ris:4}
\end{figure}

In Figure~\ref{ris:4}, we consider the behaviour of the same set of methods as in Figure~\ref{ris:3}, but for logistic regression problem defined in \eqref{eq:logistic loss} on Covertype dataset \cite{ Dua:2017}.
And again, we notice that in both cases non-accelerated version works better in our experiments 

First of all, we point out that tensor methods in general are non-trivial in implementation, so, it is interesting direction of the future work to get better implementation. Secondly, we conjecture that slow convergence that we see in our experiments is because of large $M_p$ that we use. Due to tuning of the parameters one can obtain better convergence in practice.


\end{document}